\newtheorem{theorem}[subsection]{Theorem}
\newtheorem{proposition}[subsection]{Proposition}
\newtheorem{lemma}[subsection]{Lemma}
\newtheorem{claim}{Claim}
\newtheorem{subclaim}{Subclaim}
\newtheorem{question}[subsection]{Question}
\theoremstyle{definition}
\newtheorem{definition}[subsection]{Definition}
\theoremstyle{remark}
\newtheorem*{remark}{Remark}
\newcommand{\omegaone}{{\omega_1}}
\newcommand{\omegatwo}{{\omega_2}}
\newcommand{\ch}{\mathsf{CH}}
\newcommand{\ma}{\mathsf{MA}}
\newcommand{\zfc}{\mathsf{ZFC}}
\newcommand{\cohen}[1]{\mathsf{Add}(\omega, {#1})}
\newcommand{\cohengen}[2]{\mathsf{Add}({#1}, {#2})}
\newcommand{\set}[1]{\left\{ {#1} \right\}}
\newcommand{\partition}[2]{{#1} \to ({#1}, {#2})^2}
\newcommand{\partitiongeneral}[3]{{#1} \to ({#2}, {#3})^2}
\newcommand{\notpartition}[2]{{#1} \not\to ({#1}, {#2})^2}
\newcommand{\balancedpartition}[2]{{#1} \to ({#2})^2}
\newcommand{\notbalancedpartition}[2]{{#1} \not\to ({#2})^2}
\newcommand{\type}{\mathrm{type}}
\newcommand{\dom}{\mathrm{dom}}
\newcommand{\ot}{\mathrm{ot}}
\newcommand\restr[2]{{% we make the whole thing an ordinary symbol
  \left.\kern-\nulldelimiterspace % automatically resize the bar with \right
  #1 % the function
  \vphantom{\big|} % pretend it's a little taller at normal size
  \right|_{#2} % this is the delimiter
  }}
\title{Complete Bipartite Partition Relations in Cohen Extensions}
\author{Dávid Uhrik}
\address{Charles University, Faculty of Mathematics and Physics, Department of Algebra, Sokolovská 83, 186 75 Praha 8, Czech Republic}
\email{uhrik@karlin.mff.cuni.cz}
\address{Institute of Mathematics of the Czech Academy of Sciences, Žitná 25, 115 67 Prague 1, Czech Republic}
\email{uhrik@math.cas.cz}
\subjclass[2020]{03E02, 03E35, 05C63}
\keywords{partition relations, Cohen reals, uncountable graph, double delta system}
\thanks{This work has been supported by Charles University Research Center program No.\ UNCE/SCI/022 and by the Academy of Sciences of the Czech Republic (RVO 67985840).}
\begin{document}
\maketitle

\begin{abstract}
We investigate the effect of adding $\omega_2$ Cohen reals on graphs on $\omega_2$, in particular we show that $\omega_2 \to (\omega_2, \omega : \omega)^2$ holds after forcing with $\mathsf{Add}(\omega, \omega_2)$ in a model of $\mathsf{CH}$. We also prove that this results is in a certain sense optimal as $\mathsf{Add}(\omega, \omega_2)$ forces that $\omega_2 \not\to (\omega_2, \omega : \omega_1)^2$.
\end{abstract}

\section{Introduction}
Since Ramsey's result \cite{ramsey1930} that every infinite graph contains an infinite clique or an infinite independent set the area of partition relations has been very active. However, any straightforward generalizations of his result are bound to fail, the classical result of Sierpiński \cite{sierpinski1933} states that: $\notbalancedpartition{2^\kappa}{\kappa^+}$. Nevertheless there are ways to generalize Ramsey's result.

Dushnik and Miller showed in \cite{dushnikmiller1941} that the relation $\partition{\kappa}{\omega}$ always holds. Our notation for partition relations is standard, the reader unfamiliar with the arrow notation can find definitions in the Notation subsection.

Most questions concerning partition relations on $\omegaone$ have been answered. Considering the order type of the clique Erdős and Rado \cite{erdosrado1956} improved the previous result to $\partition{\omegaone}{\omega+1}$. By a result of Hajnal \cite{hajnal1960} their result is optimal as $\ch$ implies $\notpartition{\omegaone}{\omega+2}$. The best possible relation $\partition{\omegaone}{\alpha}$ for any countable ordinal $\alpha$ may consistently hold as shown by Todorčević in \cite{todorcevic1983}.

The case of $\omegatwo$ is far from being resolved. Erdős and Rado's previous result extends also to $\omegatwo$, i.e.\ $\partition{\omegatwo}{\omega + 1}$ in $\zfc$, but they also showed \cite{erdosrado1956} that $\ch$ implies $\partition{\omegatwo}{\omega_1 + 1}$; on the other hand $\partition{\omegatwo}{\omega_1}$ already implies $\ch$. If on top of $\ch$ we further assume $2^{\omega_1}=\omegatwo$, we are limited by $\omega_1 + 1$, i.e.\ $\notpartition{\omegatwo}{\omega_1 + 2}$ holds. Rather surprisingly Laver \cite{laver1975} showed that $\ma + 2^\omega = \omegatwo$ implies $\notpartition{\omegatwo}{\omega + 2}$.

If we weaken the assumption on the homogeneous set there are further results. Baumgartner \cite{baumgartner1976} showed that after adding $\omegatwo$ Silver reals via countable support product to a model of $\ch$ we obtain a model where $\partition{\omegatwo}{\omega : \omegatwo}$.

Our results are in a similar vein as Baumgartner's, instead of looking for a clique in a graph with no cofinal independent set we will be searching only for bipartite graphs. Our results can be seen as showing how much of the partition relation $\partition{\omegatwo}{\omega_1}$ remains after adding Cohen reals. We will prove that $\partition{\omegatwo}{\omega : \omega}$ holds after forcing with $\cohen{\omegatwo}$ in a model of $\ch$. On the other hand the same conclusion as in the Silver model cannot hold after adding Cohen reals as $\cohen{\omegatwo} \Vdash \notbalancedpartition{\omega_2}{\omega : \omega_1}$.

Our main results are in a more general setting covering also the case of adding $\lambda^+$ many Cohen subsets of $\kappa$ for $\kappa, \lambda$ regular and its effect on the analogous partition relations. In particular we also get that $\partition{\omega_3}{\omega_1 : \omega_1}$ holds after adding $\omega_3$ Cohen subsets of $\omegaone$ to a model of $\mathsf{GCH}$.

\subsection*{Notation}
We use standard set theoretic notation. If $X$ is a set and $\mu$ a cardinal, then $[X]^\mu := \set{Y \subseteq X \mid |Y| = \mu}$. For two sets $X,Y$ we define $X \otimes Y := \set{\{x,y\} \mid x \in X \land y \in Y}$. If $X,Y$ are subsets of some ordered set, then $X<Y$ means that each element of $X$ lies below each element of $Y$. $\mathsf{H}(\kappa)$ denotes the collection of all sets hereditarily of cardinality less than $\kappa$; these sets will be used for constructing suitable elementary submodels, for more on this subject we refer the reader to \cite[Chapter 24.]{justweese1997}. A collection of sets forms a $\Delta$-system if there is a fixed set $r$ such that the intersection of any two sets in the collection is exactly $r$. By $\mathrm{ot}(X)$ we mean the order type of a well-ordered set $X$.

A graph $G$ is a pair $(V,E)$, where $V$ is an arbitrary set and $E \subseteq [V]^2$. A subset $X \subseteq V$ is complete if $[X]^2 \subseteq E$. An independent set in $G$ is a subset $X$ of $V$ such that $[X]^2 \cap E$ is empty. If the vertex set of the graph is well-ordered and $\alpha$ and $\beta$ are ordinals, then a subgraph (sometimes referred to as configuration) of type $(\alpha : \beta)$ is one whose vertex set is $A \cup B$, where $A$ has order type $\alpha$ and $B$ has order type $\beta$, $A<B$ and $A \otimes B \subseteq E$.

Given ordinals $\alpha, \beta, \gamma$ the partition relation $\partitiongeneral{\alpha}{\beta}{\gamma}$ is the statement that given a function $c: [\alpha]^2 \to 2$ there is a subset $X$ of $\alpha$ such that either the order type of $X$ is $\beta$ and $c''[X]^2 = \{0\}$ or the order type of $X$ is $\gamma$ and $c''[X]^2 = \{1\}$. The weaker relation $\partitiongeneral{\alpha}{\beta}{\gamma : \delta}$ says that for every function $c: [\alpha]^2 \to 2$ either there is an $X \subseteq \alpha$ such that the order type of $X$ is $\beta$ and $c''[X]^2 = \{0\}$ or there are sets $X,Y \subseteq \alpha$ such that $X<Y$, the order type of $X$ is $\gamma$, the order type of $Y$ is $\delta$ and $c''[X \otimes Y] = \{1\}$.

By a slight abuse of notation we will write $c(\alpha, \beta)$ instead of $c(\{\alpha, \beta\})$ and when writing $c(\alpha, \beta)$ we also tacitly assume that $\alpha < \beta$ if a natural ordering is present. The notation $\partitiongeneral{\alpha}{\beta}{\gamma}$ is shortened to $\balancedpartition{\alpha}{\beta}$, when $\beta = \gamma$.

Each function $c : [\alpha]^2 \to 2$ defines a graph on $\alpha$, namely $(\alpha, c^{-1}[\{1\}])$. Thus it will sometimes be convenient to talk about arbitrary functions on $\alpha$ and instead of looking for homogeneous sets for the coloring we can consider independent sets and cliques, i.e.\ we can rephrase the notion of partition relations as follows: for ordinals $\alpha, \beta, \gamma$ the partition relation $\partitiongeneral{\alpha}{\beta}{\gamma}$ says that given any graph whose vertex set is $\alpha$ and there is no independent set of order type $\beta$ we can find a complete subgraph of order type $\gamma$. The relation $\partitiongeneral{\alpha}{\beta}{\gamma : \delta}$ says that every graph on $\alpha$ either has an independent set of order type $\beta$ or a subgraph of type $(\gamma : \delta)$.

Suppose $\kappa\le\lambda$ are cardinals, the forcing for adding $\lambda$ many Cohen subsets of $\kappa$ will be denoted $\cohengen{\kappa}{\lambda}$, its underlying set is $\set{p : \lambda \to 2 \mid |p|<\kappa}$ and the ordering is reverse inclusion, see \cite{kunen1980} for an introduction to independence proofs in set theory.

\subsection*{Acknowledgement.} The author is grateful to David Chodounský, Chris Lambie-Hanson and Stevo Todorčević for helpful discussions on the topic which greatly improved the exposition of this paper.

\section{Positive result}

As the central tool of this section will be double $\Delta$-systems let us review a classical result about the existence of $\Delta$-systems. The proof can be found in \cite{kunen1980}.

\begin{theorem}[$\Delta$-system lemma]
Suppose $\kappa$ is an infinite cardinal, $\lambda > \kappa$ is regular, for each $\alpha < \lambda$ we have $|\alpha^{<\kappa}| < \lambda$ and $\mathcal{A}$ is a collection of sets such that $|\mathcal{A}| \ge \lambda$. If for all $x \in \mathcal{A}$ we have $|x|<\kappa$, then there is a $\mathcal{B} \subseteq \mathcal{A}$, such that $|\mathcal{B}| = \lambda$ and $\mathcal{B}$ forms a $\Delta$-system.
\end{theorem}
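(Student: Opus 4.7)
My approach is a sequence of pigeonhole reductions culminating in Fodor's lemma. First, normalize $\mathcal{A}$: WLOG $|\mathcal{A}|=\lambda$ and, by well-ordering $\bigcup\mathcal{A}$ and pigeonholing on cardinality then order type (each admitting fewer than $\kappa\le\lambda$ options, and $\lambda$ regular), every $a\in\mathcal{A}$ is a set of ordinals of common cardinality $\mu<\kappa$ and common order type $\tau<\kappa$. Each $a$ has $s_a=\sup a<\lambda$ by regularity of $\lambda$ together with $|a|<\kappa<\lambda$. The hypothesis $|\gamma^{<\kappa}|<\lambda$ gives $|\{a\in\mathcal{A}:s_a\le\gamma\}|<\lambda$ for every $\gamma<\lambda$, so a transfinite recursion of length $\lambda$ extracts $\{a_\alpha:\alpha<\lambda\}$ with strictly increasing $s_\alpha$; a further continuous reindexing lets me assume $\{s_\alpha\}$ contains a club $C\subseteq\lambda$ on which $s_\alpha=\alpha$.

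In the case $\tau$ is a successor, I apply Fodor's lemma on the stationary set $S=C\cap\{\alpha:\mathrm{cf}(\alpha)>|\tau|\}$, which is stationary since $|\tau|^+\le\kappa<\lambda$. For $\alpha\in S$, $a_\alpha$ has maximum $\alpha$ and $a_\alpha\setminus\{\alpha\}$ has order type less than $\mathrm{cf}(\alpha)$, hence is bounded below $\alpha$ by some regressive $\beta(\alpha)<\alpha$. Fodor yields stationary $S'\subseteq S$ on which $\beta$ is constant at some $\beta^*<\lambda$, so $a_\alpha\setminus\{\alpha\}\in[\beta^*]^{<\kappa}$, a set of size $<\lambda$ by hypothesis. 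Pigeonhole then extracts $\lambda$-many $\alpha\in S'$ with $a_\alpha\setminus\{\alpha\}$ equal to a fixed set $r$, and the corresponding sets $\{r\cup\{\alpha\}\}$ form the desired $\Delta$-system with root $r$.

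For $\tau$ a limit, $a_\alpha$ is cofinal in $s_\alpha=\alpha$ and the above bounding step fails. I would reduce to the successor case by replacing each $a_\alpha$ with $a_\alpha^+:=a_\alpha\cup\{s_\alpha\}$ (well-defined since $s_\alpha\notin a_\alpha$ when $\tau$ is limit), which has order type $\tau+1$. Applying the successor case yields a $\Delta$-subsystem with some root $r^+$; a short case analysis shows that for each index $\alpha$ in the subsystem, either $s_\alpha\in r^+$ (which can hold for at most $|r^+|<\kappa$ many indices) or $s_\alpha\notin a_\beta$ for every other $\beta$ in the subfamily, so discarding the first case leaves $\lambda$-many $a_\alpha$ forming the desired $\Delta$-system with root $r^+$ in the original family. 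The main obstacle I foresee is verifying this dichotomy, which is the most delicate piece of bookkeeping in the limit-to-successor reduction.
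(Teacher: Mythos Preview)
The paper does not prove this statement at all; it simply remarks that the proof can be found in Kunen's textbook and moves on. So there is nothing to compare at the level of argument, and your write-up is already more than the paper supplies.

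That said, your proof has a genuine gap in the ``continuous reindexing'' step. You claim that after extracting $\{a_\alpha:\alpha<\lambda\}$ with strictly increasing $s_\alpha=\sup a_\alpha$, a reindexing arranges that $\{s_\alpha\}$ contains a club $C$ on which $s_\alpha=\alpha$. This is false in general: take $\lambda=\omega_1$, $\kappa=\omega$, and $\mathcal{A}=\{\{\alpha+1\}:\alpha<\omega_1\}$. Every $s_\alpha$ is a successor ordinal, so the range of $s$ never contains a club, and no reindexing can produce fixed points on a club. A strictly increasing function $\lambda\to\lambda$ need not be normal, so its fixed-point set need not be club (or even nonempty).

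The standard Fodor-based proof avoids this entirely and, as a bonus, removes your successor/limit case split. After fixing the common order type, work on the stationary set $S=\{\alpha<\lambda:\mathrm{cf}(\alpha)\ge\kappa\}$ and apply Fodor to the regressive map $\alpha\mapsto\sup(a_\alpha\cap\alpha)$; this gives stationary $S'\subseteq S$ and $\gamma<\lambda$ with $a_\alpha\cap\alpha\subseteq\gamma$ for all $\alpha\in S'$. Pigeonhole on $[\gamma]^{<\kappa}$ (size $<\lambda$) stabilizes $a_\alpha\cap\alpha$ to a fixed $r$, and then a further transfinite thinning to ensure $a_{\alpha}\subseteq\beta$ whenever $\alpha<\beta$ in the final set forces $a_\alpha\cap a_\beta=a_\alpha\cap(a_\beta\cap\beta)=a_\alpha\cap r=r$. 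Your limit-case reduction to the successor case is correct as written (the dichotomy you worried about does hold, by the computation $s_\alpha\in a_\beta\Rightarrow s_\alpha\in a_\alpha^+\cap a_\beta^+=r^+$), but it is an unnecessary detour once the reindexing issue is repaired.
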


Double $\Delta$-systems were introduced by Todorčević in \cite{todorcevic1986} and utilized also in other papers. For an application in a similar context see \cite[Theorem 3.3.]{todorcevic2021}. A nice exposition on double $\Delta$-systems and their higher analogues can be found in \cite{lambiehanson2022}.

\begin{definition}
Let $\Gamma$ be a set of ordinals and $D := \set{p_{\alpha \beta} \mid \set{\alpha, \beta} \in [\Gamma]^2}$ a collection of sets. We say that $D$ is a \emph{double $\Delta$-system with root $p^0 \cup p^1$} if the following holds:
\begin{enumerate}
    \item for every $\alpha \in \Gamma$ $\set{p_{\alpha \beta} \mid \beta \in \Gamma \setminus (\alpha + 1)}$ is a $\Delta$-system with root $p^0_\alpha$
    \item for every $\beta \in \Gamma$ $\set{p_{\alpha \beta} \mid \alpha \in \Gamma \cap \beta}$ is a $\Delta$-system with root $p^1_\beta$
    \item $\{p^0_\alpha \mid \alpha \in \Gamma\}$ is a $\Delta$-system with root $p^0$
    \item $\{p^1_\beta \mid \beta \in \Gamma\}$ is a $\Delta$-system with root $p^1$
\end{enumerate}
\end{definition}

\begin{remark}
In our case the sets $p_{\alpha \beta}$ will be conditions in the Cohen forcing. The notation $p_{\alpha \beta}$ implicitly assumes that $\alpha < \beta$. Note also that the conditions on the double $\Delta$-system ensure that $\bigcap \{p_{\alpha\beta} \mid \{\alpha,\beta\} \in [\Gamma]^2\} = p^0 = p^1$.
\end{remark}

We will define the notion of isomorphism between forcing conditions.

\begin{definition}
Given $p,q \in \cohengen{\kappa}{\lambda}$ we define the set $\type(p)$ as the sequence $(p_i)_{i < \mu}$, where $\mu = \mathrm{ot}(\dom(p))$ and $(p_i)_{i < \mu}$ is an enumeration of the values of $p$ as a sequence respecting the ordering of its domain. Conditions $p,q$ are isomorphic, $p \simeq q$, if $\type(p) = \type(q)$.

The type of a pair, $\type(p,q)$, is defined again as a sequence $(s_i)_{i < \eta}$, where $\eta = \mathrm{ot}(\dom(p) \cup \dom(q))$ and if $(r_i)_{i < \eta}$ is an enumeration of $\dom(p) \cup \dom(q)$ respecting the ordering, then $s_i = (v_p^i, v_q^i)$, where if $r_i \in \dom(p)$, then $v_p^i = p(r_i)$, else $v_p^i = 2$; analogously for $v_q^i$. Two pairs of conditions $(p,q), (r,s)$ are isomorphic, $(p,q) \simeq (r,s)$, if $\type(p,q) = \type(r,s)$.
\end{definition}
\begin{remark}
The sets $\type(p)$ and $\type(p,q)$ just record all information about a condition (a pair of conditions). In other words it codes them as structures. Also note that $\type(p,q) = \type(r,s)$ implies the equality of types coordinate-wise, i.e.\ $\type(p) = \type(r)$ and $\type(q) = \type(s)$.
\end{remark}

In further applications we will need a more uniform version of $\Delta$-systems of conditions.

\begin{lemma}
    Suppose $\kappa < \lambda$ are regular cardinals, $|2^\mu| < \lambda$ for all $\mu < \kappa$ and $\set{p_\alpha \mid \alpha < \lambda}$ is a set of conditions in $\cohengen{\kappa}{\lambda^+}$ forming a $\Delta$-system. There is an $X \in [\lambda]^\lambda$ and an $s \subseteq \mathrm{ot}(\dom(p_0))$ such that:
    \begin{enumerate}
        \item for all $\alpha, \beta \in X$ we have $\type(p_\alpha) = \type(p_\beta)$, and
        \item for all $\alpha \in X$ if $(d_i \mid i < \mathrm{ot}(\dom(p_\alpha)))$ is an increasing enumeration of the domain of $p_\alpha$, then $\{ (d_i, p_\alpha(d_i)) \mid i \in s \}$ is exactly the root of the original $\Delta$-system.
    \end{enumerate}
\end{lemma}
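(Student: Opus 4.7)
The plan is to refine $\{p_\alpha \mid \alpha < \lambda\}$ twice by simple pigeonhole arguments, using the cardinal arithmetic hypothesis $|2^\mu| < \lambda$ for all $\mu < \kappa$.

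First, since each $|\dom(p_\alpha)| < \kappa$ and $\lambda$ is regular with $\lambda > \kappa$, there exist $\mu < \kappa$ and $X_0 \in [\lambda]^\lambda$ such that $\mathrm{ot}(\dom(p_\alpha)) = \mu$ for every $\alpha \in X_0$. Second, each $\type(p_\alpha)$ for $\alpha \in X_0$ is a function $\mu \to 2$; there are only $2^\mu < \lambda$ such functions, so by pigeonhole there is $X_1 \in [X_0]^\lambda$ on which $\type(p_\alpha)$ is constant. This already yields conclusion (1).

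For (2), let $r$ be the root of the original $\Delta$-system. For each $\alpha \in X_1$ let $(d_i^\alpha)_{i<\mu}$ enumerate $\dom(p_\alpha)$ in increasing order and set
\[
s_\alpha := \set{i < \mu \mid d_i^\alpha \in \dom(r)}.
\]
Then $s_\alpha$ is an element of $\mathcal{P}(\mu)$, and since $|\mathcal{P}(\mu)| = 2^\mu < \lambda$, a third pigeonhole gives $X \in [X_1]^\lambda$ and a fixed $s \subseteq \mu$ such that $s_\alpha = s$ for all $\alpha \in X$. This is the desired $X$ and $s$.

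To verify conclusion (2), fix $\alpha \in X$. Since $r \subseteq p_\alpha$, whenever $d_i^\alpha \in \dom(r)$ we have $p_\alpha(d_i^\alpha) = r(d_i^\alpha)$; by the choice of $s$ this happens exactly when $i \in s$. Moreover $\set{d_i^\alpha \mid i \in s} = \dom(r)$ because the enumeration $i \mapsto d_i^\alpha$ is a bijection between $\mu$ and $\dom(p_\alpha) \supseteq \dom(r)$. Therefore
\[
\set{(d_i^\alpha, p_\alpha(d_i^\alpha)) \mid i \in s} = \set{(d, r(d)) \mid d \in \dom(r)} = r,
\]
as required. There is no serious obstacle here; the statement is essentially a uniformization on top of the $\Delta$-system structure, and the only point needing care is verifying that the common index set $s$ really pins down the root pointwise, which follows automatically from the definition of $s_\alpha$ together with $r \subseteq p_\alpha$.
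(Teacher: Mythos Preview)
Your proof is correct and follows essentially the same three-step pigeonhole argument as the paper: first stabilize the order type of the domains, then the type, then the relative position of the root. The only cosmetic difference is that you record the root's position via the index set $s_\alpha \subseteq \mu$ while the paper encodes it as a characteristic function $h(\alpha):\gamma_0 \to 2$; these are the same data, and your explicit verification that $\set{(d_i^\alpha, p_\alpha(d_i^\alpha)) \mid i \in s} = r$ is a nice addition the paper leaves implicit.
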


\begin{proof}
    The proof is a routine counting argument.

    To ensure that the types of all the conditions are the same note that the order type of the domain of any condition from $\cohengen{\kappa}{\lambda^+}$ is an ordinal below $\kappa$. Let $f:\lambda \to \kappa$ be a function such that $f(\alpha)=\ot(\dom(p_\alpha))$. As $\kappa < \lambda$ and $\lambda$ is regular we get a $\gamma_0 < \kappa$ and an $A \in [\lambda]^\lambda$ such that the order type of the domain of $p_\alpha$ is $\gamma_0$ for all $\alpha \in A$. Next consider each function from $\gamma_0$ to $2$. As $2^{\gamma_0} < \lambda$, there is less than $\lambda$ many such functions. Given a condition $p_\alpha$ for $\alpha \in A$ let $\varphi_{\alpha}: \gamma_0 \to \dom(p_\alpha)$ be the unique increasing bijection and define a function $g: \lambda \to 2^{\gamma_0}$ such that $g(\alpha) = p_\alpha \circ \varphi_\alpha$. As before there is an $A' \in [A]^\lambda$ and a fixed function $q: \gamma_0 \to 2$ such that $\type(p_\alpha) = q$ for each $\alpha \in A'$.

    To make sure that the relative position of the root of the $\Delta$-system stays the same across all conditions define another function $h: \lambda \to 2^{\gamma_0}$ such that $h(\alpha)(\beta) = 1$ if and only if $(\varphi_\alpha(\beta), p_\alpha(\varphi_\alpha(\beta))$ is in the root of the $\Delta$-system. Analogously as before we find a function $r: \gamma_0 \to 2$ such that for $\lambda$ many $\alpha$ we have $h(\alpha) = r$, this ensures the second condition.
\end{proof}

\begin{lemma} \label{delta_iso}
Suppose $\gamma_0 \le \gamma_1$ are ordinals and $\set{p_\alpha \mid \alpha < \gamma_0}$,  $\set{q_\alpha \mid \alpha < \gamma_1}$ are sets of conditions in $\cohengen{\kappa}{\lambda}$. If $\set{q_\alpha \mid \alpha < \gamma_1}$ forms a $\Delta$-system and for each $\alpha < \beta < \gamma_0$ we have $(p_\alpha, p_\beta) \simeq (q_\alpha, q_\beta)$, then $\set{p_\alpha \mid \alpha < \gamma_0}$ also forms a $\Delta$-system.
\end{lemma}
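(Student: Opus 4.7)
My plan is to unpack the combinatorial information encoded by $\type(p_\alpha,p_\beta)$ and to exploit the $\Delta$-system property of $\set{q_\alpha \mid \alpha < \gamma_1}$ via the assumed type equalities. The key observation is that $\type(p_\alpha,p_\beta)$ determines, as a subset of $\ot(\dom(p_\alpha))$, exactly which indices $j$ in the increasing enumeration of $\dom(p_\alpha)$ correspond to elements of $\dom(p_\alpha)\cap\dom(p_\beta)$: namely those $j$ for which the $j$-th coordinate of the type whose $p_\alpha$-entry is non-$2$ also has its $p_\beta$-entry non-$2$. An analogous description holds on the $p_\beta$-side, and the type also records the values $p_\alpha$ and $p_\beta$ assume at those positions.

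Applied to the $q_\alpha$'s, the intersection $\dom(q_\alpha)\cap\dom(q_\gamma)=\dom(r)$, where $r$ is the root, does not depend on $\gamma$. Hence the subset $s_\alpha\subseteq\ot(\dom(q_\alpha))$ singled out on the $q_\alpha$-side of $\type(q_\alpha,q_\gamma)$ depends only on $\alpha$; by the assumed type equality the same $s_\alpha$ is singled out on the $p_\alpha$-side of $\type(p_\alpha,p_\gamma)$. Writing $R(\alpha)\subseteq\dom(p_\alpha)$ for the elements of $\dom(p_\alpha)$ at the positions in $s_\alpha$, we conclude $\dom(p_\alpha)\cap\dom(p_\beta)=R(\alpha)=R(\beta)$ for every $\alpha<\beta<\gamma_0$, the second equality coming from the analogous statement on the $p_\beta$-side. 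Hence, for $\gamma_0\ge 2$, all the $R(\alpha)$ coincide in a single set $R$; the cases $\gamma_0\le 1$ are vacuous.

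Finally, the type equalities also transport the values: for $q$, at positions in $\dom(r)$ both $q_\alpha$ and $q_\gamma$ agree with $r$, so the values of $p_\alpha$ at the positions of $R$ form the same sequence as the values of $r$, independent of $\alpha$. Consequently $\restr{p_\alpha}{R}$ is a single fixed function $r'$ and $p_\alpha\cap p_\beta=r'$ for all $\alpha<\beta<\gamma_0$, giving the root of the desired $\Delta$-system. The main obstacle is the careful index-bookkeeping showing that $\type(p_\alpha,p_\beta)$ locates, within the increasing enumerations of both $\dom(p_\alpha)$ and $\dom(p_\beta)$, where their common part sits; once this is made precise, the rest of the argument is a short combinatorial observation.
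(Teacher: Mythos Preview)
Your proof is correct and follows essentially the same idea as the paper's: read off from $\type(p_\alpha,p_\beta)$ the positions, within the increasing enumerations of the two domains, where the intersection lies, and then transport the root of the $q$-system to the $p$-system via the assumed type equalities. One minor difference worth noting: the paper's proof tacitly uses that all $q_\alpha$ are pairwise isomorphic with the root sitting at a fixed relative position (as arranged by the preceding refinement lemma), and therefore works with a single index set $s$; your version allows the index set $s_\alpha$ to depend on $\alpha$ and then argues $R(\alpha)=R(\beta)$ directly, which is more faithful to the lemma as literally stated. The underlying mechanism is the same.
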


\begin{proof}
First enumerate in increasing order the domain of $q_0$ as $(d_i \mid i < \mathrm{ot}(\dom(q_0)))$. As the conditions $\set{q_\alpha \mid \alpha < \gamma_1}$ are isomorphic and form a $\Delta$-system let $s$ be the set of indices $i < \mathrm{ot}(\dom(q_0))$ such that $\set{(d_i, q_0(d_i)) \mid i \in s}$ is exactly the root.

If we similarly enumerate the domain of $p_0$ as $(e_i \mid i < \mathrm{ot}(\dom(p_0)))$ (note that $\mathrm{ot}(\dom(q_0)) = \mathrm{ot}(\dom(p_0))$), we claim that $\set{(e_i, p_0(e_i)) \mid i \in s}$ is the root of the $\Delta$-system formed by the conditions $\set{p_\alpha \mid \alpha < \gamma_0}$.

Given any $p_\alpha$ and $p_\beta$ as this pair is isomorphic to the pair $(q_\alpha, q_\beta)$ we have that $d_i \in \dom(q_\alpha) \cap \dom(q_\beta)$ if and only if $e_i \in \dom(p_\alpha) \cap \dom(p_\beta)$ and this happens exactly when $i \in s$, also when $d_i \in \dom(q_\alpha) \cap \dom(q_\beta)$ then $d_i$ is also the $i$-th element of the domain of both $q_\alpha$ and $q_\beta$ and the same holds for $e_i$ and any $p_\alpha$ and $p_\beta$. Finally as the conditions $\set{q_\alpha \mid \alpha < \gamma_1}$ are isomorphic so are $\set{p_\alpha \mid \alpha < \gamma_0}$ so in particular $p_\alpha \simeq p_0 \simeq p_\beta$ and we are done.
\end{proof}

The main theorem follows.

\begin{theorem} \label{cohenandgraphs}
Suppose $\kappa < \lambda$ are regular cardinals. If $\lambda^{<\lambda} = \lambda$ and for each $\alpha < \lambda$ we have $|\alpha^{<\kappa}| < \lambda$, then $\cohengen{\kappa}{\lambda^+}$ forces the relation $\partition{\lambda^+}{\mu : \mu}$ for any $\mu < \kappa^+$.
\end{theorem}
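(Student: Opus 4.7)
The plan is to fix a $\mathbb{P}$-name $\dot c$ (writing $\mathbb{P}:=\cohengen{\kappa}{\lambda^+}$) and a condition $p_0\in\mathbb{P}$ that forces $\dot c$ to be a coloring of $[\lambda^+]^2$ with no independent set of order type $\lambda^+$, and to produce $q\le p_0$ forcing a bipartite $(\mu:\mu)$ subgraph of the graph $\dot c^{-1}(\{1\})$. The proof will have three stages: choosing edge-witness conditions, organizing them into a double $\Delta$-system, and extracting the configuration generically.

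In Stage 1, I want to attach in $V$ a condition $p_{\alpha\beta}\le p_0$ forcing $\dot c(\alpha,\beta)=1$ for each pair $\{\alpha,\beta\}$ in a $\lambda^+$-sized set $\Gamma$, all lying below a single common condition $q_0\le p_0$. I would build this using an elementary submodel $M\prec \mathsf{H}(\theta)$ of size $\lambda$, closed under $<\lambda$-sequences (available since $\lambda^{<\lambda}=\lambda$) and containing $\dot c$ and $p_0$, together with a reflection / recursive construction: were no such uniform choice available, one could recursively build an independent set of order type $\lambda^+$ forced by some $q\le p_0$, contradicting the hypothesis on $p_0$.

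In Stage 2, I apply the classical $\Delta$-system lemma stated above — for which the hypothesis $|\alpha^{<\kappa}|<\lambda$ for $\alpha<\lambda$ is exactly what is required — to $\{p_{\alpha\beta}\}_{\{\alpha,\beta\}\in[\Gamma]^2}$ in rounds, thinning $\Gamma$ each time: first, for each fixed $\alpha$, extract a $\Delta$-system in $\beta$ with root $p^0_\alpha$; then thin so that $\{p^0_\alpha\}$ is itself a $\Delta$-system with root $p^0$; symmetrically on the $\beta$-side to obtain the roots $p^1_\beta$ and $p^1$. One application of the preceding uniform $\Delta$-system lemma then makes all $p_{\alpha\beta}$ share a single $\type$ with the root in a fixed relative position, yielding a global root $p^*:=p^0=p^1$ and a genuine double $\Delta$-system in the sense of the definition above.

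In Stage 3, I pass to $V[G]$ for a generic $G\ni q_0$ and fix $r\in G$ extending $p^*$. By the double $\Delta$-system, the residues $\dom(p^0_\alpha)\setminus\dom(p^*)$ in $\alpha$, $\dom(p^1_\beta)\setminus\dom(p^*)$ in $\beta$, and the pair-specific $\dom(p_{\alpha\beta})\setminus(\dom(p^0_\alpha)\cup\dom(p^1_\beta))$ in pairs, are all pairwise disjoint. Restricting to $\Gamma'\in[\Gamma]^{\lambda^+}$ on which $\dom(r)$ does not intersect these residue blocks outside $\dom(p^*)$, I build $X<Y$ of order type $\mu$ inside $\Gamma'$ by a recursive selection of length $\mu\le\kappa$; the mutual genericity of $G$ on the pairwise disjoint residue blocks ensures that at every stage the set of surviving candidates whose relevant residues are correctly realized by $G$ has size $\lambda^+$. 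For the chosen $X,Y$ each $p_{\alpha\beta}$ lies in $G$, hence $c(\alpha,\beta)=1$ for every $\alpha\in X,\ \beta\in Y$. The hard part will be Stage 1 — converting the non-combinatorial hypothesis on $p_0$ into a uniform family of $\lambda^+$-many edge witnesses below a single ground-model condition; Stages 2 and 3 are then $\Delta$-system bookkeeping and a standard mutual-genericity argument, with the hypotheses $\lambda^{<\lambda}=\lambda$ and $\mu<\kappa^+$ ensuring that the cardinal arithmetic and the length of the recursion work out.
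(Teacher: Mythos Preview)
The gap is in Stage~2, which you label ``bookkeeping'' but which is actually where the main idea lives. You cannot build a double $\Delta$-system by applying the one-dimensional lemma in rounds: thinning $\{p_{\alpha\beta}:\beta>\alpha\}$ to a $\Delta$-system yields a surviving set $S_\alpha$ of $\beta$'s that depends on $\alpha$, and there is no way to intersect $\lambda^+$ (or even $\lambda$) many such $S_\alpha$ and retain a large common refinement. The column side has the same problem. This coordination failure is precisely why double $\Delta$-systems require a genuinely two-dimensional argument rather than an iteration of the classical lemma. (Relatedly, your Stage~1 asks for a $\lambda^+$-sized $\Gamma$ on which every pair has an edge-witness; that would need a partition relation of the form $\lambda^+\to(\lambda^+,\lambda^+)^2$, which fails.)

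The paper resolves this by merging your Stages~1 and~2. Working inside an elementary submodel $M$ of size $\lambda$ closed under ${<}\lambda$-sequences, with $\delta:=M\cap\lambda^+$, it recursively builds a set $B\subseteq\delta$ of order type $\lambda$ (not $\lambda^+$) so that for $\alpha<\beta<\gamma$ in $B$ one has $(p_{\alpha\gamma},p_{\beta\gamma})\simeq(p_{\alpha\delta},p_{\beta\delta})$, $p_{\alpha\beta}\restriction\beta=p_{\alpha\delta}\restriction\delta$, $\dom(p_{\alpha\beta})\subseteq\gamma$, and $p_{\alpha\beta}\Vdash\dot c(\alpha,\beta)=1$. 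If at some stage no next element can be found, reflection in $M$ yields a $\lambda^+$-sized set forced to be independent --- so the no-large-independent-set hypothesis is spent \emph{inside} the $\Delta$-system construction, not before it. The payoff is that the entire two-dimensional structure is inherited from the \emph{single} one-dimensional family $\{p_{\alpha\delta}:\alpha\in B\}$, which is thinned once by the ordinary lemma; Lemma~\ref{delta_iso} then transfers the column $\Delta$-system from $\delta$ to every $\gamma\in B$, and the row $\Delta$-systems come from the restriction and domain conditions. Your outline places the elementary submodel in Stage~1 to harvest edge witnesses, but its real role is to supply the anchor ordinal $\delta$ that makes the double $\Delta$-system possible.
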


\begin{proof}

Consider the extension by the Cohen forcing adding $\lambda^+$ subsets of $\kappa$. Fix a condition $q$ and a name $\dot{c}$ such that $q$ forces that in the extension $c$ is a function from $[\lambda^+]^2$ to $2$. Without loss of generality we will assume that $q=\emptyset$. If it is the case that $$\emptyset \Vdash \exists X \in [\lambda^+]^{\lambda^+} : \dot{c}''[X]^2 = \{0\}$$ we are done, so suppose this is not the case. Now an improved double $\Delta$-system can be found.

The proof of the following claim closely resembles the argument in \cite[Theorem~3.3.]{todorcevic2021}, where Todorčević constructs a double $\Delta$-system of Cohen conditions with analogous properties but he considers colorings with range $\omega$. Todorčević used this technique already in \cite{todorcevic1986}.

\begin{claim}
There is a set $X \in [\lambda^+]^{\lambda}$ of order type $\lambda$ and a set of conditions  $D := \set{p_{\alpha \beta} \in \cohengen{\kappa}{\lambda^+} \mid \set{\alpha, \beta} \in [X]^2}$ such that the following holds:

\begin{enumerate}
    \item $p_{\alpha \beta} \Vdash \dot{c}(\alpha, \beta) = 1$,
    \item $D$ forms a double $\Delta$-system.
\end{enumerate}
\end{claim}

\begin{proof}
In the ground model for every $\alpha < \beta$ in $\lambda^+$ either $\emptyset \Vdash \dot{c}(\alpha, \beta) = 0$ or there is a condition $p$ such that $p \Vdash \dot{c}(\alpha, \beta) = 1$. For every pair fulfilling the second option fix such a condition $p_{\alpha \beta}$, otherwise put $p_{\alpha \beta} := \emptyset$. Consider a regular cardinal $\theta$ large enough so that $\mathsf{H}(\theta)$ contains all the relevant objects we have considered so far. Choose an elementary submodel $M$ of $\mathsf{H}(\theta)$ of size $\lambda$ such that $M^{<\lambda} \subseteq M$ (we assume $\lambda^{<\lambda} = \lambda$ in the ground model) and $\delta := M \cap \lambda^+$ has cofinality $\lambda$. Fix also a $\lambda$-sequence converging to $\delta$, say $(d_\alpha \mid \alpha < \lambda)$.

\begin{subclaim}
There is a set, $B$, cofinal in $\delta$ of order type $\lambda$ with the following properties for every $\alpha < \beta < \gamma$ in $B$:
\begin{enumerate}
    \item \label{cond_iso} $p_{\alpha \beta} \simeq p_{\alpha \delta}$
    \item \label{cond_restr} $p_{\alpha \beta}\restriction\beta = {p_{\alpha \delta}}\restriction{\delta}$
    \item \label{cond_iso_pair} $(p_{\alpha \gamma}, p_{\beta \gamma}) \simeq (p_{\alpha \delta}, p_{\beta \delta})$
    \item \label{cond_dom} $\dom({p_{\alpha \beta}}) \subseteq \gamma$
    \item \label{cond_forc} $p_{\alpha \beta} \Vdash \dot{c}(\alpha, \beta) = 1$
    \item \label{cond_forc_delta} $p_{\alpha \delta} \Vdash \dot{c}(\alpha, \delta) = 1$
\end{enumerate}
\end{subclaim}
\begin{proof}
The set $B$ cannot be an element of $M$ but any initial segment of such a set $B$ belongs to $M$ because $M$ is closed under sequences of length $<\lambda$, this will be used in the inductive construction. Suppose we have constructed an initial segment of $B$, a sequence $b := (b_\xi \mid \xi < \beta)$ for some ordinal $\beta < \lambda$ satisfying all the conditions, and $b_\alpha \ge d_\alpha$ for all $\alpha < \beta$. We want to construct $b_\beta$ above $d_\beta$. Note that $b \in M$ as the sequence has length $<\lambda$, and also $d_\beta \in M$. Consider the following sequences/matrices for every $\eta \in \lambda^+ \setminus d_\beta$:

\begin{align*}
    S_1^\eta &:= (\type(p_{\alpha \eta}) \mid \alpha \in b) \\
    S_2^\eta &:= ({p_{\alpha \eta}}\restriction{\eta} \mid \alpha \in b) \\
    S_3^\eta &:= (\type(p_{\alpha \eta}, p_{\beta \eta}) \mid \alpha < \beta \in b)
\end{align*}

For any $\eta \in M$ all three sets $S_1^\eta, S_2^\eta$ and $S_3^\eta$ are definable in $M$ and moreover the entire sequence $(S^\eta_i \mid \eta < \lambda^+)$ is in $M$ for each $i$ in $\{1,2,3\}$. Note also that $S_i^\delta$ is an element of $M$ for $i\in\set{1,2,3}$ even though $\delta \not\in M$, this follows from $M$ being closed under sequences of length $<\lambda$. Now define the set:
\begin{multline*}
    S := \{ \eta < \lambda^+ \mid \eta \ge d_\beta \land S_1^\eta = S_1^\delta \land S_2^\eta = S_2^\delta \land S_3^\eta = S_3^\delta \land \\ \land \eta > \sup\set{\sup\dom(p_{\alpha \beta}) \mid \alpha < \beta \in b} \land \forall \alpha \in b : p_{\alpha \eta} \Vdash \dot{c}(\alpha, \eta) = 1 \}
\end{multline*}
It is obvious that $S$ is definable in $M$ and $\delta \in S$, thus $S$ is a stationary subset of $\lambda^+$.

Finally consider the set
$$
T := \set{\eta \in S \mid \forall \xi \in S \cap \eta : \emptyset \Vdash \dot{c}(\xi, \eta) = 0}
$$

If $\delta$ is not an element of $T$, then there is some $\xi \in S \cap \delta$ for which $\emptyset \not\Vdash \dot{c}(\xi, \delta) = 0$ so $p_{\xi \delta}$ was defined as some condition $p$ such that $p \Vdash \dot{c}(\xi, \delta) = 1$, as $\xi$ is also an element of $S$ we can put $b_\beta := \xi$. Now each condition of the claim is satisfied as witnessed by $\xi$ belonging to $S$ and the fact that $\xi$ witnesses that $\delta \not\in T$.

On the other hand if $\delta \in T$, then $T$ is unbounded in $\lambda^+$ and clearly $\emptyset \Vdash \dot{c}''[T]^2 = \{0\}$, a contradiction.
\end{proof}

Let $B$ be the set constructed in the previous subclaim. We can also assume that $\set{p_{\alpha \delta} \mid \alpha \in B}$ is a $\Delta$-system (we assume that for each $\alpha < \lambda$ we have $|\alpha^{<\kappa}| < \lambda$) so in particular $p_{\alpha \delta} \simeq p_{\beta \delta}$ for $\alpha, \beta \in B$. We now show that the set $B$ can be refined so that the set of conditions $\set{p_{\alpha \beta} \mid \set{\alpha, \beta} \in [B]^2}$ will form a double $\Delta$-system.

The previous paragraph, condition (\ref{cond_iso_pair}) and Lemma \ref{delta_iso} imply that for each $\gamma$ also the set $\set{p_{\alpha \gamma} \mid \alpha \in B \cap \gamma}$ is a $\Delta$-system with root $p^1_\gamma$. We can now assume that $\set{p^1_\gamma \mid \gamma \in B}$ also forms a $\Delta$-system with root $p^1$.

Conditions (\ref{cond_iso}), (\ref{cond_restr}) and (\ref{cond_dom}) imply that for each $\alpha \in B$ the set of conditions $\set{p_{\alpha \beta} \mid \beta \in B \setminus (\alpha + 1)}$ is a $\Delta$-system with root $p^0_\alpha := {p_{\alpha \delta}}\restriction{\delta}$. Given any $p_{\alpha \beta}$ and $p_{\alpha \gamma}$ for $\alpha < \beta < \gamma$ in $B$ consider the intersection $p_{\alpha \beta} \cap p_{\alpha \gamma}$, clearly ${p_{\alpha \delta}}\restriction{\delta} \subseteq p_{\alpha \beta} \cap p_{\alpha \gamma}$ by condition~(\ref{cond_restr}). For the other direction if $(d,v) \in p_{\alpha \beta} \cap p_{\alpha \gamma}$, then $d < \gamma$ by condition (\ref{cond_dom}) thus $(d,v) \in p_{\alpha \gamma}\restriction\gamma$ and again by condition (\ref{cond_restr}) $(d,v) \in p_{\alpha \delta}\restriction{\delta}$. Finally we can also assume that $\set{p^0_\alpha \mid \alpha \in B}$ forms a $\Delta$-system with root $p^0$. Let $X$ be the refined set $B$, this is our desired set.
\end{proof}

We will denote the root $p^0 = p^1$ of the double $\Delta$-system simply as $p$.

Before we proceed fix an ordinal $\kappa \le \mu < \kappa^+$. Choose two sets: $X_0$ and $X_1$, subsets of $X$ such that $X_0 < X_1$ and the order type of both sets is $\kappa \cdot \mu$. Fix also a bijection $g: \kappa \to \mu$.

\begin{claim}
$p$ forces a $(\mu : \mu)$ configuration in color $1$.
\end{claim}

\begin{proof}
Let $G$ be a generic set containing $p$, by induction we will construct sequences $(s_\alpha \mid \alpha < \kappa)$ in $X_0$ and $(t_\alpha \mid \alpha < \kappa)$ in $X_1$ such that $s_\alpha$ is in the $g(\alpha)$-th section of $X_0$, i.e.\ if $f: \kappa \cdot \mu \to X_0$ is the unique increasing bijection then $$s_\alpha \in [f(\kappa \cdot g(\alpha)), f(\kappa \cdot (g(\alpha)+1))),$$ denote this subset of $X_0$ as $X_0^\alpha$, analogously for $t_\alpha$ and $X_1$. We will make sure that for all $\alpha, \beta \in \kappa$ we have $p_{s_\alpha t_\beta} \in G$; as $p_{s_\alpha t_\beta}$ forces the color of the pair $\{s_\alpha, t_\beta\}$ to be $1$ this will ensure the conclusion of the claim.

To start the induction note that by genericity for some $\alpha \in X_0^0$ we have $p^0_\alpha \in G$, this is because $\set{p^0_\alpha \mid \alpha \in X_0^0}$ is a $\Delta$-system of size $\kappa$ with root $p^0 \ge p$ and thus this set is predense below $p$. By the same argument there is some $\beta \in X_1^0$ such that $p_{\alpha \beta}$ is in $G$, so put $s_0 := \alpha$ and $t_0 := \beta$.

Suppose we have already constructed $(s_\alpha \mid \alpha < \gamma)$ and $(t_\alpha \mid \alpha < \gamma)$ such that $p_{s_\alpha t_\beta} \in G$ for all $\alpha, \beta < \gamma$. We will now find $\sigma \in X_0^\gamma$ such that $\set{p_{\sigma t_\alpha} \mid \alpha < \gamma} \subseteq G$ and this will be our $s_\gamma$.

Suppose that no $\sigma$ satisfies our requirements, i.e.\ there is no $\sigma$ in $X_0^\gamma$ such that $\set{p_{\sigma t_\alpha} \mid \alpha < \gamma} \subseteq G$. Then there must exist a condition $r \le p$ forcing this (note that $\set{p_{\sigma t_\alpha} \mid \alpha < \gamma}$ is an element of the ground model because our forcing is $\kappa$-closed): $$ r \Vdash \forall \sigma \in X_0^\gamma : \{p_{\sigma t_\alpha} \mid \alpha < \gamma\} \not\subseteq \dot{G}$$ This means that for all $\sigma \in X_0^\gamma$ there exists a $\beta < \gamma$ such that $r \bot p_{\sigma t_\beta}$. By going to a refinement we can assume that for $\kappa$ many $\sigma$ there is a fixed $\beta' < \gamma$ such that $r \bot p_{\sigma t_{\beta'}}$, call this set $C$. However note that the set $\{p_{\sigma t_{\beta'}} \mid \sigma \in C\}$ is a $\Delta$-system with root $p^1_{t_{\beta'}}$ which is contained in the generic set $G$ because $p_{s_0 t_{\beta'}} \le p^1_{t_{\beta'}}$ and $p_{s_0 t_{\beta'}} \in G$. Since $r$ has size $<\kappa$ and $r \parallel p^1_{t_{\beta'}}$, it cannot be incompatible with every condition from $\{p_{\sigma t_{\beta'}} \mid \sigma \in C\}$, a contradiction.

The construction of $t_\gamma$ is almost verbatim.

\end{proof}
This concludes the proof.
\end{proof}

\section{A Negative partition relation from Cohen forcing}

The result of the previous section cannot be strengthened so that the second partition of the bipartite graph has size $\kappa^+$.

\begin{proposition}
If $\kappa < \lambda$ are regular cardinals, then $\cohengen{\kappa}{\lambda} \Vdash \notbalancedpartition{\lambda}{\kappa : \kappa^+}$.
\end{proposition}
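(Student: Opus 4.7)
The plan is to use the Cohen generic to define a bad $2$-coloring. Concretely, fix in the ground model a bijection $\phi \colon [\lambda]^2 \to \lambda$ and, in the extension, set $c(\set{\alpha,\beta}) := G(\phi(\set{\alpha,\beta}))$, where $G \colon \lambda \to 2$ is the Cohen generic.

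First I would verify the following key observation, which handles ground-model candidates for the smaller side of a potential monochromatic bipartite subgraph: for any $X \in [\lambda]^\kappa \cap V$ and each $i \in \set{0,1}$, the set $N_i(X) := \set{\beta > \max X \mid \forall \alpha \in X,\ c(\alpha,\beta) = i}$ is empty in $V[G]$. Indeed, forcing $\beta \in N_i(X)$ would require a condition $p$ to assign the value $i$ at the $\kappa$ pairwise distinct coordinates $\phi(\alpha,\beta)$ for $\alpha \in X$, incompatible with $|p| < \kappa$. So no condition forces $\beta \in N_i(X)$, and the set of conditions forcing $\beta \notin N_i(X)$ is dense below every $p$ (adjoin $(\phi(\alpha,\beta), 1-i)$ for some $\alpha \in X$ with $\phi(\alpha,\beta) \notin \dom(p)$); by genericity $N_i(X) = \emptyset$.

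Next, to cover $X$'s that are genuinely new in $V[G]$, I would invoke the $\kappa^+$-cc of $\cohengen{\kappa}{\lambda}$ (valid under the cardinal arithmetic implicit in the paper's setting) to assign any name $\dot X$ for a $\kappa$-sized set a nice name with support $I \subseteq \lambda$ of cardinality $\le \kappa$. Factoring $\cohengen{\kappa}{\lambda} \cong \cohengen{\kappa}{I} \times \cohengen{\kappa}{\lambda \setminus I}$, the set $X$ is decoded in $V[G_I]$ and is ground-model relative to the residual forcing $\cohengen{\kappa}{\lambda \setminus I}$. Applying the key observation in $V[G_I]$ to that residual forcing excludes every $\beta$ from $N_i(X)$ for which $\kappa$-many coordinates $\phi(\alpha,\beta)$ ($\alpha \in X$) lie outside $I$. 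The remaining candidate $\beta$'s have $\phi(\alpha,\beta) \in I$ for at least some $\alpha \in X$, so they lie in $\bigcup_{\alpha \in X} \phi(\alpha,\cdot)^{-1}(I)$; injectivity of each $\phi(\alpha,\cdot)$ and $|I| \le \kappa$ bound this union by $|X| \cdot |I| \le \kappa$. Hence $|N_i(X)| \le \kappa < \kappa^+$, precluding a monochromatic $(\kappa : \kappa^+)$-bipartite subgraph in either color.

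The main obstacle I anticipate is the factorization step, which leans on the $\kappa^+$-cc and therefore on some cardinal arithmetic. Without it, I would mirror the $\Delta$-system machinery of Claim 1 in Theorem \ref{cohenandgraphs}: enumerate $\dot Y$ as $(\dot y_\xi)_{\xi < \kappa^+}$, pick $q_\xi \le p$ deciding $\dot y_\xi = y_\xi$, pigeonhole to fix a single element $x_0$ forced into $\dot X$ by every $q_\xi$ after refinement, extract a $\Delta$-system of the $q_\xi$ with root $r$, and then attempt a contradiction by flipping $G(\phi(x_0,y_\xi))$ to $0$ for some $\xi$, using a symmetry-of-forcing argument to preserve $y_\xi \in \dot Y$.
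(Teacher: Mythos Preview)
Your proof is correct and follows essentially the same route as the paper: define the coloring from the generic itself, capture the $\kappa$-sized side $X$ in an intermediate extension $V[G_I]$ with $|I|\le\kappa$ via the product factorization, and then exploit that conditions have size $<\kappa$ to flip a single relevant coordinate. The paper's write-up is a little more direct in the final step: rather than bounding $|N_i(X)|$ by a case split, it simply picks some $y\in Y$ with $y\notin\bigcup I$ (possible since $|\bigcup I|\le\kappa<|Y|$), then some $x\in X\setminus\bigcup\dom(p)$, and notes that $\{x,y\}$ is fresh so both colors are still available. Your preliminary ``ground-model $X$'' observation is not really needed separately, since after factoring it is subsumed by the general argument; and your alternative $\Delta$-system plan in the last paragraph is unnecessary, as the factorization step (which the paper also uses, citing Kunen) already does the job.
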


\begin{proof}
Consider an equivalent form of the forcing notion, specifically the poset $C_{S} : = \set{p : S \to 2 \mid |p| < \kappa}$, where $S := [\lambda]^2$ and the ordering is reverse inclusion. We will prove that the generic graph, the union over the generic set $G$, added this way does not contain a homogeneous $(\kappa : \kappa^+)$ configuration.

Suppose, for contradiction, that in the extension there is a set $X$ of size $\kappa$ and a set $Y$ of size $\kappa^+$ above it so that that all edges between them are monochromatic. Use the fact that when forcing with $\cohengen{\kappa}{\lambda}$ any set of size $\kappa$ can be decided already when forcing over a domain of size $\kappa$ \cite[Lemma VIII.2.2.]{kunen1980}. To be more precise denote by $M$ the ground model; there is a set $I \subseteq [\lambda]^2$ of size $\kappa$ so that $X \in M[G \cap C_{I}]$ (note that $C_{[\lambda]^2} \cong C_{I} \times C_{[\lambda]^2 \setminus I}$). Now working in the extension by $C_I$, there must exist a condition $p$ in $ C_{[\lambda]^2 \setminus I}$ so that $p \Vdash y \in \dot{Y}$ for some $y \not\in \bigcup I$, otherwise $C_{[\lambda]^2 \setminus I} \Vdash \dot{Y} \subseteq \bigcup I$, which is not possible. Now $p$ has size $<\kappa$ and $|X|=\kappa$ hence there must be an $x \in X \setminus \bigcup \dom(p)$. Now $p$ can be extended by the pair $(\set{x, y}, i)$ for both $i \in \set{0, 1}$ which is a contradiction.
\end{proof}

\section{Possible strengthening of our result}
The consistency of the relation $\partition{\omegatwo}{\omega + 2}$ with $\neg\ch$ is unknown and seems to be substantially more involved than our result. By a result of Raghavan and Todorčević \cite{raghavantodorcevic2018} this relation implies the non-existence of $\omegatwo$-Suslin trees. Laver showed \cite{laver1975} that in a model where $\ma$ holds and $2^\omega = \omegatwo$ we have $\notpartition{\omegatwo}{\omega : 2}$.

\begin{question}
Is the relation $\partition{\omegatwo}{\omega + 2}$ consistent with $\neg\ch$?
\end{question}

\end{document}